\def\BibTeX{{\rm B\kern-.05em{\sc i\kern-.025em b}\kern-.08em
    T\kern-.1667em\lower.7ex\hbox{E}\kern-.125emX}}
\begin{document}
%
%
%
%
%
%
%
\def \myJournal {American Control Conference 2024}
\def \myDoi {}
\def \myPaperSiteName {IEEE Xplore}
\def \myPaperSiteLink {}
\def \myYear {2023}
\def \myPaperCitation{N. Pavlasek, S.H.Q. Li, B. A{\c{c}}{\i}kme{\c{s}}e, M. Oishi, and C. Danielson, ``Blameless and Optimal Control under Prioritized Safety Constraints,'' \url{http://arxiv.org/abs/2304.06625}, 2023.}


\begin{figure*}[t]

\thispagestyle{empty}
\begin{center}
\begin{minipage}{6in}
\centering
This paper has been submitted for presentation at \emph{\myJournal}. 
\vspace{1em}

This is the author's version of an article that has, or will be, published in this journal or conference. Changes were, or will be, made to this version by the publisher prior to publication.
\vspace{2em}




\vspace{15cm}
\copyright \myYear \hspace{4pt}IEEE. Personal use of this material is permitted. Permission from IEEE must be obtained for all other uses, in any current or future media, including reprinting/republishing this material for advertising or promotional purposes, creating new collective works, for resale or redistribution to servers or lists, or reuse of any copyrighted component of this work in other works.

\end{minipage}
\end{center}
\end{figure*}
\newpage
\clearpage
\pagenumbering{arabic} 
\title{Blameless and Optimal Control under Prioritized Safety Constraints}
\author{{Natalia Pavlasek$^{1}$, Sarah H.Q. Li$^{2}$, Beh{\c{c}}et A{\c{c}}{\i}kme{\c{s}}e$^1$, Meeko Oishi$^3$, and Claus Danielson$^3$}
\thanks{This work was supported by the National Science Foundation under Grant CMMI-2105631. Any opinions, findings, and conclusions or recommendations expressed in this material are those of the authors and do not necessarily reflect the views of the National Science Foundation.
}
\thanks{$^1$William E. Boeing  Department of Aeronautics and Astronautics, University of Washington at Seattle.
        {\tt\small pavlasek@uw.edu},
        {\tt\small behcet@uw.edu}, and
$^2$Department of Electrical Engineering and Information Technology, ETH Zürich, Zürich, Switzerland.
        {\tt\small sarahli@control.ee.ethz.ch}, and
$^3$University of New Mexico, 
        {\tt\small oishi@unm.edu}, and
        {\tt\small cdanielson@unm.edu}. 
        }
}

\maketitle

\begin{abstract}
In many resource-limited optimal control problems, multiple constraints may be enforced that are jointly infeasible due to external factors such as subsystem failures, unexpected disturbances, or fuel limitations. In this manuscript, we introduce the concept of \emph{blameless optimality} to characterize control actions that a) satisfy the highest prioritized and feasible constraints and b) remain optimal with respect to a mission objective. For a general optimal control problem with jointly infeasible constraints, we prove that a single optimization problem cannot find a blamelessly optimal control sequence. Instead, finding blamelessly optimal control actions requires sequentially solving at least two optimal control problems: one to determine the highest priority level of constraints that is feasible and another to determine the optimal control action with respect to these constraints. We apply our results to a rocket landing scenario in which violating at least one safety-induced landing constraint is unavoidable. Leveraging the concept of blameless optimality, we formulate blamelessly optimal controllers that can autonomously prioritize the constraints most critical to a mission.
\end{abstract}

\section{Introduction}
\label{sec:introduction}

Consider a scenario in which a planetary lander is not able to perform its primary landing and must instead autonomously select a landing site. In order to make the best use of resources and accomplish the greatest number of mission goals possible, the lander should select a site by evaluating the potential benefits, such as safety of the landing site, or proximity to sites of interest. Such an evaluation of trade-offs typically rests on an ordering of priorities: first protecting the lander from damage, then attempting to achieve the highest priority mission goals. Designing autonomous systems to adhere to a prioritization that reflects operational choices is an important yet largely unexplored problem~\cite{Wu2022_AIAA}, that reflects upon the autonomous systems' perceived reliability, trustworthiness, and overall effectiveness~\cite{Chancellor_2023}. A lander whose actions reflect mis-ordered priorities (i.e., selecting a landing site that results in damage to the vehicle at the cost of being close to a site of interest, for example) would be considered misguided and even blameworthy, as its actions are in conflict with priorities.  


In this paper, we propose the design of controllers for autonomous systems, that are both {\em optimal} and {\em blameless}.  We interpret blamelessness, based on a formal description in~\cite{Halpern_Kleiman-Weiner_2018}, as the ability to avoid actions whose outcomes are inconsistent with an ordered prioritization of constraints.  Such mis-prioritizations only appear when not all constraints can be satisfied, that is, when the system operates in regions of the state-space that are infeasible with respect to some constraints. For instance, in the 
landing scenario in Figure~\ref{fig:helicopter}, if a landing zone that satisfies all safety constraints exists, an alternative landing site does not have to be considered.

\begin{figure}
    \centering
    \includegraphics[width=\columnwidth]{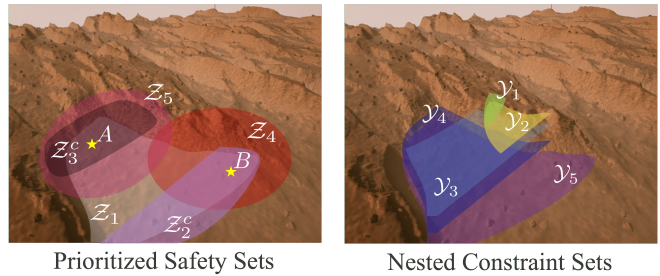}
    \caption{Optimal controllers for a lander should satisfy constraints in order of their priority, in order to be {\em blameless}.}
    \label{fig:helicopter}
\end{figure}

While extensive work has considered finding optimal solutions under constraints, considerably less work has been done on designing solutions under {\em infeasible} constraints. Unlike standard optimal control problems, in which infeasibility is commonly averted by softening the constraints~\cite{Weiss2015_MPC_soft_constraints}, controller design under infeasibility must focus not only on satisfying as many constraints as possible, but also on satisfying higher priority constraints over ones that are lower priority. That is, ensuring blamelessness in autonomous systems means that controllers must be designed to satisfy constraints in a manner that reflects their underlying prioritization.  Further, due to the urgency associated with operation in infeasible scenarios, methods to synthesize blameless controllers should have low computational cost. 

\textbf{Related work.}
Lexicographic optimization, in which a finite number of ordered objective functions are sequentially optimized, such that low priority objectives do not interfere with higher priority objectives~\cite{Isermann_linlexico}, is one method to ensure prioritization amongst constraints.  It has been used to address applications of prioritized safety~\cite{Lesser_2017, Dueri_2015}, as well as to relax constraints in decreasing order of priority under infeasibility~\cite{Vada2001_MPCinfeasibility, Miksch2011_lexicographicMPC}.  
However, the primary disadvantage of lexicographic methods is that even for convex problems, the problem must be solved iteratively, leading to a high computational burden. Typically lexicographic optimization requires solving as many optimization problems as there are prioritized objectives~\cite{Marques_boollexico}.  

Related work in reachability has examined conditions under which a controller exists, that ensures feasibility with respect to a known set of constraints~\cite{Tomlin2000, Bansal2017_reachability}.  However, these approaches typically presume that all constraints are of the same priority, or rely upon a pre-determined trade-off between safety and performance~\cite{Vinod_2018}. Reachability and positive invariance have been integrated into path planning approaches that prioritize safety~\cite{Starek2017_FMT,Leung2020_reachability, Vinod_Rice_2018, Chiang2015, Danielson2016, Dixit2018_MPCpotentialreach}, but are not readily extendable to multiple priorities and objectives.

\textbf{Contributions}.  
We provide a formal definition of blamelessness and formulate the problem of designing blameless controllers within an optimal control framework, amenable to a wide variety of application domains. We show that for general objective functions and constraints, it is not possible to solve for optimality and blamelessness in a single optimization problem, i.e., that no single continuous objective function can produce a blamelessly optimal action. We further show that it is possible to solve for optimality and blamelessness in exactly two optimization problems; the first determines the highest priority set of constraints that can be solved and the second finds the optimal control actions under these constraints. 
We demonstrate our algorithm on a real-time rocket landing problem, whose safety requirements are jointly infeasible due to fuel limitations.

\subsubsection*{Notation}
We use the shorthand $x_{0:N} $ to denote the sequence of variables $x_{0},\ldots, x_{N}$, for $x_i \in \mathbb R^n, i \in \{1, \cdots, N\}$. A sequence of constrained control inputs is denoted as $u_{0:N-1} = u_{0}, \ldots, u_{N-1}, u_{k}\in \Uset \subseteq \mathbb R^\ell$, with $u_{0:N-1} \in \Uset^N$. The notation $[N]$ is used to denote $\{1,\ldots,N\}$. For the set $\mc{X}$, the complement is denoted $\mc{X}^c$, so that $x\in\mc{X}^c$ implies $x\notin\mc{X}$.

\section{Blameless Optimal Control} \label{sec:modelling}
This section introduces the concept of a blamelessly optimal control sequence given user-prioritized constraints. 
The state trajectory $x_{0:N}$ is given by the discrete-time dynamics,
\begin{align}\label{eqn:f_dynamics}
    x_{k+1 }:= f(x_{k }, u_{k }) \in \reals^n, \forall k \in [N-1],
\end{align}
under control sequence $u_{0:N-1} \in \reals^{N\ell}$ and initial state $x_0 \in \Xset_0 \subseteq \reals^n$.
We assume that the dynamics~\eqref{eqn:f_dynamics} are subject to a set of \emph{prioritized constraints} that are ranked by importance. 

\begin{assumption}[Prioritized safety sets]\label{assum:safetysets}
The user-defined compact sets $\{\mc{Z}_i\}_{1\leq i\leq m} = \mc{Z}_1, \ldots, \mc{Z}_m \subseteq \mc{R}^p$, with $p\leq n$, are prioritized state constraints on a subset of the state $x_k$, such that $\mc{Z}_1$ has the highest priority. For ease of notation, it is assumed that $p = n$, meaning that the full state at a given time is constrained by the safety sets.
\end{assumption}

The ordering of the sets $\Zset_i$ is user-defined and dictates the prioritization of the constraints to be satisfied in the event that the system dynamics and control constraints cause the problem subject to a subset of the safety constraints to be infeasible.
Also note that, without loss of generality, we presume that these constraint sets are terminal constraints, i.e., applicable to only the state at the last time instant, $x_N$.
We recast the prioritized safety sets as nested constraint sets.

\begin{definition}[Nested constraint sets]\label{def:Yi}
Let the nested constraint set $\Yset_i$ be defined by $\Yset_i = \bigcap_{j=1}^{m+1-i} \mathcal{Z}_j$, such that $\Yset_1\subseteq\ldots\subseteq\Yset_m$. 
\end{definition}

\begin{example}
Consider a sample-return mission in which an autonomous lander is carrying a rover that will drive from the landing site to sites of interest to collect samples, but can only drive a limited distance. A safe landing area is defined near two points of interest, denoted by $A$ and $B$. If feasible, the lander should select a landing site from which the rover can collect material from both sites of interest, but should avoid landing too close to the sites and risking contaminating the material. This example is depicted in Figure~\ref{fig:helicopter}. The prioritized sets represent first landing in the designated landing area ($\mathcal{Z}_1$), then avoiding regions which may damage the scientifically interesting material $B$ and $A$ ($\mathcal{Z}_2$ and $\mathcal{Z}_3$, respectively), then landing in an area from which the interesting sites $B$ and $A$ can be reached by the rover ($\mathcal{Z}_4$ and $\mathcal{Z}_5$, respectively). The nested constraint sets describe landing sites that will not contaminate sites $A$ or $B$ and from which the rover can reach both $A$ and $B$ ($\Yset_1$), sites that will not contaminate sites $A$ or $B$ and from which the rover can only reach site $B$ ($\Yset_2$), sites that will not contaminate sites $A$ or $B$, but from which the rover cannot reach either site ($\Yset_3$), and sites that avoid contaminating only site $B$ ($\Yset_4$), and all sites in the designated landing area ($\Yset_5$).

\end{example}

Nested constraint sets provide an intuitive understanding of blameworthiness, in which it is desirable for an autonomous system to sacrifice satisfaction of low priority constraints to ensure the satisfaction of high priority constraints. 
For instance, we are willing to sacrifice reaching site $A$ if it ensures that the lander does not damage the area surrounding site $B$.
In short, we wish to maximize the index $i$ such that $\Zset_j$ is satisfied for all $j \leq i$, or equivalently, minimize $i$ such that $\Yset_i$ is satisfied.

\begin{remark}
We assume that at least one of the nested constraint sets is non-empty $\Yset_i \neq \varnothing$. Thus, $\Yset_j \supseteq \Yset_i$ are non-empty for $j \in \{i,\dots,m\}$. 
\end{remark}

We formally define the concept of \emph{blameworthiness} of a control sequence with respect to the nested constraint sets.
\begin{definition}[Blameworthy control sequence]\label{def:blameworthy}
Suppose that the smallest nested constraint set that can be satisfied given the system dynamics, control constraints and initial condition is $\Yset_{i^\star}$. A control sequence $u_{0:N-1} \in \reals^{N\ell}$ that results in the state trajectory $x_{0:N}$ is \emph{blameworthy} if $x_N \notin \Yset_{i^\star}$.
\end{definition}

A control sequence is blameworthy if there exists an alternative control sequence that reaches a higher priority safety set. In Example 1, a control sequence that causes the lander to land in a region that may damage sites $A$ or $B$ is blameworthy if there is an alternative control sequence that would cause the lander to land in a region in which damage is unlikely to occur. If there exists no such alternative control sequence, it is blameless.

\begin{definition}[Blameless control sequence]\label{def:blameless}
A control sequence $u_{0:N-1} \in \reals^{N\ell}$ is \emph{blameless} if it is not blameworthy.
\end{definition}

A blameless control sequence minimizes the index $i\in [m]$ such that the resulting state sequence $x_{0:N}$ given by~\eqref{eqn:f_dynamics}, satisfies $x_N \in \Yset_i$.

For the set of initial states given by $\mc{X}_0 \subseteq \reals^n$ and the input constraint set by $\mc{U} \subseteq \reals^{\ell}$, we define the dynamically feasible set, defined by the dynamics~\eqref{eqn:f_dynamics} and the control constraints, $u_{0:N-1} \in \Uset^N$ as follows.

\begin{definition}[{Dynamically feasible set}]
The set of states and control sequences achievable from the initial state subject to the dynamics and control constraints is called the \emph{dynamically feasible set}. It is denoted as
\begin{multline}\label{eqn:opt_ctrl_feasible_set}
    \mc{F}(\mc{X}_0, \mc{U}) = \{(x_{0:N}, u_{0:N-1}) \ | \  x_{k+1 } = f(x_{k }, u_{k }), \\
    u_k \in \mc{U}, x_0 \in \mc{X}_0, \forall  k\in [N\!-\!1]\} \subseteq \reals^{(N+1)n + N\ell}.
\end{multline}
\end{definition}
The concepts of blameworthiness and blamelessness are depicted in Figure~\ref{fig:blameworthy_blameless}. The set $\mc{F}_N(\mc{X}_0, \mc{U}) = \{x_N | x_N \in f \textrm{ for some } f \in \mc{F}(\mc{X}_0, \mc{U})\}$ is the set of terminal states in the dynamically feasible set. A controller that results in a terminal state that lies in the intersection of $\mc{F}_N(\mc{X}_0, \mc{U})$ and $\mc{Y}_3$ is blameless since $\mc{Y}_3$ is the highest priority set that is dynamically feasible. A control sequence that results in a terminal state that is not in $\mc{Y}_3$ is blameworthy since a solution that results in the terminal state being in a higher priority set exists.

\begin{figure}
    \centering
    \includegraphics[width=\columnwidth]{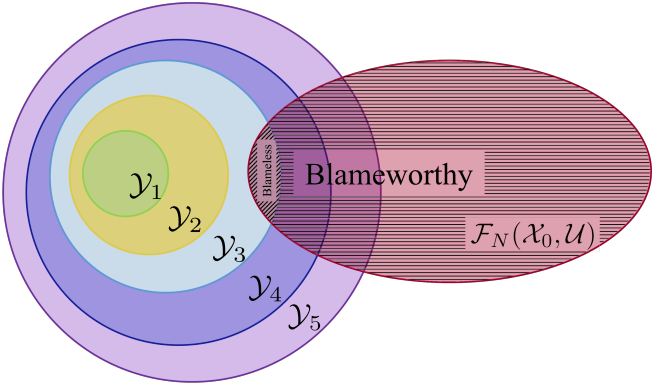}
    \caption{Illustration of the concepts of blameworthiness and blamelessness. }
    \label{fig:blameworthy_blameless}
\end{figure}

We assume the user-defined continuous objective, referred to as the \emph{mission objective}, is
\begin{align}
    q(x_{0:N}, u_{0:N-1}):\reals^{Nn} \times \reals^{N\ell} \mapsto \reals,\label{eq:objective}
\end{align}
where $q$ evaluates the cost of each state and control sequence. For a given objective $q$, we can define a blamelessly optimal control sequence as follows. 

\begin{definition}
[Blameless optimality]\label{def:blameless_optimal}
Consider a state and control sequence $(x_{0:N}, u_{0:N-1})\in \mc{F}(\mc{X}_0, \mc{U})$, where $\mc{F}(\mc{X}_0, \mc{U})$ is given by~\eqref{eqn:opt_ctrl_feasible_set}.
The control sequence $u_{0:N-1}$  is \emph{blamelessly optimal} if 
\begin{enumerate}
    \item It is blameless according to Definition~\ref{def:blameless}, and
    \item For all $(\hat{x}_{0:N}, \hat{u}_{0:N-1}) \in \mc{F}(\mc{X}_0, \mc{U})$ where $\hat{u}_{0:N-1}$ is blameless, 
\begin{align}\label{eq:optimal}
    q(x_{0:N}, u_{0:N-1}) \leq q(\hat{x}_{0:N}, \hat{u}_{0:N-1}).
\end{align}
\end{enumerate}
\end{definition}

\begin{problem} \label{prob:Pi}
Given nested constraint sets $\{\Yset_i\}_{1\leq i\leq m}$ (Definition~\ref{def:Yi}), and initial state $x_0\in\Xset_0$, find a blamelessly optimal control sequence $u_{0:N-1} \in \mc{U}^{N}$.
\end{problem}

Problem~\ref{prob:Pi} entails finding a control sequence that leads to the satisfaction of the largest number of safety sets possible, while taking into account the prioritization of the sets and optimality with respect to objective $q$ in \eqref{eq:objective}.

\begin{remark}
    Critically, we
    assume that not all nested constraint sets $\mc{Y}_i$ are feasible. Therefore, Problem~\ref{prob:Pi} is equivalent to finding 1) the smallest index $i^\star$ such that $x_N \in \mc{Y}_{i^\star}$ is feasible, and 2) the control sequence $u_{0:N-1}$ that is optimal under the constraint $x_N \in \mc{Y}_{i^\star}$. 
\end{remark}


\section{Solving for Blamelessly Optimal Control Sequences}
\label{sec:theorems}

Blamelessness and optimality may be competing objectives when the optimal solution with respect to objective $q$ lies outside of the highest priority safety set, as is the case in Figure~\ref{fig:tradeoff_example}. In this section, we discuss various solution methods for finding blamelessly optimal control sequences.

\subsection{Need for Blameless Optimality}
We illustrate the need for blameless optimality in Figure~\ref{fig:tradeoff_example} for a one-dimensional state space and a one-dimensional control space. The objective $q$ given by~\eqref{eq:objective} is a quadratic function of the state, shown by the blue curve. The nested safety sets are by the shaded regions: the green, yellow, and blue regions correspond to $\Yset_1$, $\Yset_2$, and $\Yset_3$, respectively. The red hashed region shows states that are infeasible under $\mc{F}(\mc{X}_0, \mc{U})$ given by~\eqref{eqn:opt_ctrl_feasible_set}. Although $x_{\mathrm{blmless-opt}}$ has a higher cost than $x_{\mathrm{opt}}$, it is preferable to $x_{\mathrm{opt}}$ because it satisfies a higher priority nested constraint set, $\Yset_1$.

\begin{figure}
    \centering
    \includegraphics[trim={0cm 2cm 0 3cm},clip, width=\columnwidth]{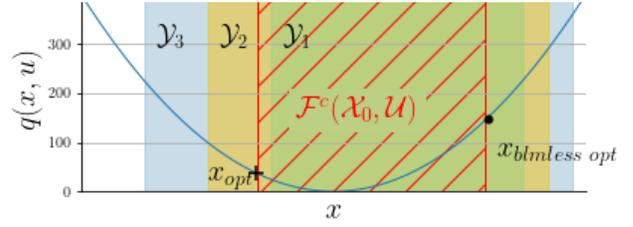}
    \caption{Visualization of the need for blameless optimality. Point $x_{opt}$ is optimal with respect to the blue objective subject to the dynamics constraints but is in $\Yset_2$. Point $x_{blmless\;opt}$ achieves a higher cost than $x_{opt}$, but is blamelessly optimal since is in $\Yset_1$.}
    \label{fig:tradeoff_example}
\end{figure}

\subsection{Related Methods}
The concept of priority within optimization problems is not novel. Some related concepts the exist in the literature are discussed in the following sections.

\subsubsection{Connection with Lexicographic Optimization}

The lexicographic optimization problem 
\begin{align*}
    \min_{w \in \mc{W}} \; \left( q_1(w), \ldots, q_m(w) \right),
\end{align*}
in which $w \in \reals^\eta$ is the solution variable, $\mc{W}$ is the set of feasible solutions, and $\left( \cdot, \ldots, \cdot \right)$ denotes an ordering, is typically addressed by iteratively solving
\begin{align*}
    &\mathrm{for}~j=1,\ldots, m:\\
    &\hspace{0.5cm}\min_{w \in \mc{W}}\; q_j(w) \quad \st\; q_i \leq q_i^\star,\quad i = 1,\ldots, j-1,
\end{align*}
where $q_i^\star$ is the optimal value of the $i^{th}$ problem. 
In lexicographic optimization, multiple ordered objectives are optimized. This is related to Problem~\ref{prob:Pi}, in that a notion of priority exists within an optimization problem.

It is possible to construct an algorithm that uses an iterative approach similar to that of lexicographic optimization to solve for a blamelessly optimal control sequence. This algorithm, shown below, iteratively imposes constraints in order of priority.

\begin{algorithm}
\caption{Brute-Force Blameless Control}
\label{alg:brute}
\begin{algorithmic}
\State Set $i=0$
\While{not feasible}
\State Loosen constraints $i\gets i+1$
\State Solve \vspace{-1em}
\begin{subequations}
\label{eq:brute}
\begin{alignat}{3}
&& \min_{u_{0:N-1}} ~& q(x_{0:N},u_{0:N-1}) \\
&& \st ~& x_{k+1 } = f(x_{k },u_{k }), \;x_{0 } \in \Xset_0, \\
&&& u_{k } \in \Uset, \;x_{N } \in \Yset_i 
\end{alignat}
\end{subequations}
\EndWhile
\end{algorithmic}
\end{algorithm}

\begin{proposition}
Under Assumption~\ref{assum:safetysets}, Algorithm~\ref{assum:safetysets} produces a control sequence $u_{0:N-1} \in \mc{U}^N$ that is blamelessly optimal (Definition~\ref{def:blameless_optimal}).
\end{proposition}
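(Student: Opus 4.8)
The plan is to show that the brute-force algorithm terminates at the smallest \emph{feasible} priority index, that this index coincides with $i^\star$ from Definition~\ref{def:blameworthy}, and that the optimization~\eqref{eq:brute} solved at that index ranges over \emph{exactly} the blameless control sequences, so that its minimizer satisfies both conditions of Definition~\ref{def:blameless_optimal}.

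First I would establish \emph{monotonicity of feasibility} in the priority index. Because the sets are nested, $\Yset_1 \subseteq \cdots \subseteq \Yset_m$ (Definition~\ref{def:Yi}), the terminal constraint $x_N \in \Yset_i$ only weakens as $i$ increases: if $(x_{0:N}, u_{0:N-1}) \in \mc{F}(\Xset_0,\Uset)$ satisfies $x_N \in \Yset_i$, then it also satisfies $x_N \in \Yset_j$ for every $j \geq i$. Hence the set of indices for which~\eqref{eq:brute} is feasible is upward closed, and there is a well-defined smallest feasible index; call it $i^\star$. Termination of the while loop then follows, since the loop increments $i$ from $1$ and, by the standing assumption that a blameless solution exists (equivalently, that $i^\star$ is well-defined, Definition~\ref{def:blameworthy}), halts the first time $i = i^\star$. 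By construction the algorithm never exits at an index smaller than $i^\star$, since those problems are infeasible.

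Next I would verify \emph{blamelessness}. The returned sequence satisfies $x_N \in \Yset_{i^\star}$ with $i^\star$ minimal among feasible indices. Since $\Yset_j$ is infeasible for every $j < i^\star$, no dynamically feasible sequence can reach a strictly higher-priority set, so the returned sequence minimizes the index $i$ with $x_N \in \Yset_i$ over all of $\mc{F}(\Xset_0,\Uset)$; by the characterization following Definition~\ref{def:blameless}, it is blameless.

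The main point — and the step I expect to require the most care — is \emph{optimality over the blameless set}. I would show that the feasible region of the final solve, $\{(x_{0:N},u_{0:N-1}) \in \mc{F}(\Xset_0,\Uset) : x_N \in \Yset_{i^\star}\}$, is exactly the set of blameless sequences. The inclusion $\supseteq$ is immediate. For $\subseteq$, any feasible $(x_{0:N},u_{0:N-1})$ with $x_N \in \Yset_{i^\star}$ has minimal satisfied index at most $i^\star$; but infeasibility of $\Yset_{i^\star - 1}$ forces $x_N \notin \Yset_{i^\star-1}$, so the minimal index is exactly $i^\star$, the smallest value attainable by any feasible sequence, whence $(x_{0:N},u_{0:N-1})$ is blameless. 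Consequently, minimizing $q$ over this region, as the algorithm does at $i = i^\star$, minimizes $q$ over all blameless sequences, which is precisely condition~2 of Definition~\ref{def:blameless_optimal}. Combined with blamelessness, this establishes blameless optimality. The subtlety here is recognizing that the seemingly looser constraint $x_N \in \Yset_{i^\star}$ admits no ``more blameless'' competitors, precisely because every lower-priority set is dynamically infeasible; without this observation one might wrongly worry that the algorithm optimizes over too large a set.
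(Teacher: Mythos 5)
Your proof is correct, and it is actually more complete than the argument the paper gives. The paper's proof is a three-line contradiction restricted to blamelessness: it supposes the output were blameworthy for some $\Yset_j$, notes that the existence of an alternative sequence reaching $\Yset_j$ makes problem~\eqref{eq:brute} feasible at index $j$, and concludes (using the nesting, implicitly) that the algorithm's terminal state must then lie in $\Yset_j$ --- a contradiction; the optimality condition~2 of Definition~\ref{def:blameless_optimal} is never addressed explicitly, being left implicit in the fact that the final solve minimizes $q$. You instead argue directly: you use the nesting to get monotonicity of feasibility in $i$, conclude termination at the minimal feasible index and identify that index with the $i^\star$ of Definition~\ref{def:blameworthy}, and --- this is the step the paper omits --- show that the feasible region of the final solve, $\{(x_{0:N},u_{0:N-1})\in\mc{F}(\Xset_0,\Uset): x_N\in\Yset_{i^\star}\}$, coincides exactly with the set of dynamically feasible blameless sequences, so the minimizer of $q$ over that region satisfies condition~2 verbatim. (In fact this last identification is even more immediate than you make it: by Definition~\ref{def:blameworthy}, blameless simply means $x_N\in\Yset_{i^\star}$, so you do not strictly need the detour through the minimal satisfied index and infeasibility of $\Yset_{i^\star-1}$.) What the paper's route buys is brevity; what yours buys is an explicit verification of both clauses of blameless optimality and a clear statement of where nestedness and the existence of a feasible $\Yset_{i^\star}$ are used, which is exactly the content the paper's terse proof glosses over.
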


\begin{proof}
Suppose the control sequence found using Algorithm~\ref{alg:brute} was blameworthy for constraint $\Yset_j$. Then, by definition there exists $\hat{u}_{0:N-1 } \in \Uset^N$ such that $\hat{x}_{N } \in \Yset_j$. Thus, problem~\eqref{eq:brute} is feasible and therefore $x_{N } \in \Yset_j$.
\end{proof}

While Algorithm~\ref{alg:brute} produces a blamelessly optimal control sequence, it is not an ideal solution because of the excessive cost that comes from solving as many optimization problems as there are prioritized constraints. For a scenario with $m$ prioritized safety constraints, Algorithm~\ref{alg:brute} solves $m-1$ infeasible optimization problems and $1$ feasible optimization problem in the worst case. We are interested in finding a more computationally efficient solution to Problem~\ref{prob:Pi}, that is amenable to real-time application in safety-critical systems.

\subsubsection{Connection with Reachability Analysis}

We can show that blamelessness can equivalently be defined using the successor sets used in reachability analysis~\cite{Althoff_reachability_successor}. A successor set is defined as follows.

\begin{definition}[Successor Set]\cite[Def.10.3]{borrelli2017predictive} Consider the set of initial conditions $\mc{X}_0 \subseteq \reals^n$ and set of inputs $\Uset$. The $N$-step successor set under the input constraints $u_{0:N-1} \in \mc{U}^N$ and dynamics~\eqref{eqn:f_dynamics} is given by 
\begin{multline}
    \suc{\mc{X}_0, \mc{U}^N} = \{x_1, \ldots, x_N \ | \ \exists u_k \in \mc{U}, x_0 \in \mc{X}_0 \\  x_{k+1} = f(x_k, u_k), \forall k \in [N] \}.
\end{multline}

\end{definition}
We use the shorthand $\suc{\mc{X}_0} = \suc{\mc{X}_0,\mc{U}^N}$, and denote $n^{th}$ state in $\suc{\mc{X}_0}$ as $\text{Suc}_n(\mc{X}_0)$. For a given input sequence $u_{0:N-1} \in \Uset^{N}$, we use the shorthand $\suc{\mc{X}_0, u_{0:N-1}}$. 
The following proposition formulates blamelessness using successor sets.

\begin{proposition}
\label{prop:reach}
The control sequence $u_{0:N-1} \in \mc{U}^N$ is blameless if $\Yset_i \cap \text{Suc}_N(\mc{X}_0) \neq \varnothing$ implies
$\text{Suc}_N(\mc{X}_0, u_{0:N-1}) \in \Yset_i$ for all $i= 1,\dots,m$.
\end{proposition}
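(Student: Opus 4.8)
The plan is to show that the successor-set hypothesis is, once translated, exactly the blamelessness condition of Definition~\ref{def:blameless}, so that the proof reduces to unpacking the definition of $i^\star$ and applying the hypothesis at that single index. The bridge I would build first is the observation that, by construction, $\text{Suc}_N(\mc{X}_0)$ is the collection of all terminal states $x_N$ reachable from $\mc{X}_0$ under some admissible control sequence $u_{0:N-1}\in\mc{U}^N$. Consequently, the set $\Yset_i$ is \emph{feasible} in the sense used in Definition~\ref{def:blameworthy} — i.e.\ there exists an admissible control whose terminal state lands in $\Yset_i$ — if and only if $\Yset_i \cap \text{Suc}_N(\mc{X}_0) \neq \varnothing$. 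I would make this equivalence explicit, since it is the one place where the two languages (feasibility versus successor sets) must be reconciled.

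Given this equivalence, I would let $i^\star$ denote the smallest index for which $\Yset_{i^\star}$ is feasible, as in Definition~\ref{def:blameworthy}; by the bridge above this is the smallest index with $\Yset_{i^\star}\cap\text{Suc}_N(\mc{X}_0)\neq\varnothing$. Such an index exists because at least one nested set is assumed nonempty and reachable, and the nesting $\Yset_1\subseteq\cdots\subseteq\Yset_m$ guarantees that once $\Yset_{i^\star}$ is feasible, so is every $\Yset_i$ with $i\geq i^\star$. I then apply the hypothesis of the proposition at the particular index $i=i^\star$: since $\Yset_{i^\star}\cap\text{Suc}_N(\mc{X}_0)\neq\varnothing$, the hypothesis forces $\text{Suc}_N(\mc{X}_0,u_{0:N-1})=x_N\in\Yset_{i^\star}$. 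This is precisely the statement $x_N\in\Yset_{i^\star}$, so by Definition~\ref{def:blameworthy} the sequence is not blameworthy, and hence blameless by Definition~\ref{def:blameless}.

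I expect the main (and essentially only) obstacle to be the definitional reconciliation in the first step: Definition~\ref{def:blameworthy} phrases feasibility as ``the smallest nested constraint set that can be satisfied,'' whereas the proposition works entirely through intersections with the successor set. Care is needed to confirm that ``$\Yset_i$ can be satisfied'' and ``$\Yset_i\cap\text{Suc}_N(\mc{X}_0)\neq\varnothing$'' denote the same condition, and that $\text{Suc}_N(\mc{X}_0,u_{0:N-1})$ is a single terminal state (valid when $x_0$ is fixed as in Problem~\ref{prob:Pi}) so that the membership $\in\Yset_{i^\star}$ is well posed. Once this correspondence is pinned down, the remaining argument is an immediate application of the hypothesis at $i=i^\star$, with the indices $i\neq i^\star$ contributing no additional constraint beyond what the nesting already supplies.
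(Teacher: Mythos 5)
Your proof is correct and follows essentially the same route as the paper's: identify the nonempty-intersection condition $\Yset_i \cap \text{Suc}_N(\mc{X}_0) \neq \varnothing$ with feasibility of landing in $\Yset_i$, then apply the hypothesis (at the critical index $i^\star$) to conclude $x_N \in \Yset_{i^\star}$, hence blamelessness. The paper's proof is just a terser version of this argument; your explicit handling of $i^\star$ and of the feasibility/successor-set equivalence fills in details the paper leaves implicit.
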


\begin{proof}
    If $\Yset_i \cap \text{Suc}_N(\mc{X}_0) \neq \varnothing$, then there is a control sequence $u_{0:N-1}$ such that $x_N \in \Yset_i$. Then, $u_{0:N-1}$ is blameless if $\text{Suc}_N(\mc{X}_0, u_{0:N-1}) \in \Yset_i$.
\end{proof}

\subsection{Blamelessness and Optimality Relationship}

In this section, we show that in general, finding blamelessly optimal control sequences requires the solution of two optimization. We show this by contradiction. We use this result to formulate a framework for solving for blamelessly optimal control sequences that moves the computational complexity involved in the brute force method,~\eqref{eq:brute}, offline.

Consider the problem of designing an objective function that produces control sequences that are simultaneously blameless and optimal with respect to objective $q$. We introduce the notation $\hat{q}$ to represent a continuous objective that, when minimized, results in a control sequence that is blameless.

\begin{problem}\label{prob:cts_obj}
Find a continuous objective function $\hat{q}:\reals^{Nn}\times \reals^{N\ell}\mapsto \reals$ for the optimization problem
\begin{subequations}
\label{eqn:cts_obj}
\begin{alignat}{3}
&&\min_{u_{0:N-1} \in \reals^{N\ell}}~&  \hat{q}(x_{0:N},u_{0:N-1}) \label{eqn:cts_obj-cost} \\
&&\st~& (x_{0:N}, u_{0:N-1}) \in \mc{F}(\mc{X}_0, \mc{U}), \label{eqn:cts_obj-constraints}
\end{alignat}    
\end{subequations}
whose solution $u_{0:N-1}^\star$ is blameless, and optimal with respect to the user-defined objective $q$.
\end{problem}

We define the following set to facilitate the solution to Problem~\ref{prob:cts_obj}. Consider all the dynamically feasible state and control sequences, $(x_{0:N},{u_{0:N-1}) \in \mc{F}(\mc{X}_0, \mc{U})}$, that have a cost $\hat{q}(x_{0:N}, u_{0:N-1}) \leq \alpha_i$ for $i=1,\ldots,m$. We denote the set of terminal states attained by these state and control sequences as
\begin{multline}\label{eqn:levelset}
\mc{H}_{i}(\hat{q}) = \{ x_{N} \mid (x_{0:N}, u_{0:N-1}) \in \mc{F}(\mc{X}_0, \mc{U}),\\
\hat{q}(x_{0:N}, u_{0:N-1}) \leq \alpha_i \}.
\end{multline}
The set $\mc{H}_i(\hat{q})$ is the set of terminal conditions in the $\alpha_i$ sublevel set of $\hat{q}$.

The following theorem presents the necessary and sufficient conditions for designing an objective $\hat{q}$ that produces blameless control sequences.

\begin{theorem} \label{thm:optimal_blameless}
Under the nested constraint sets $\{\mc{Y}_i\}_{1\leq i\leq m}$ defined in Definition~\ref{def:Yi}, the optimal control problem~\eqref{eqn:cts_obj} produces blameless control sequences if and only if the continuous objective $\hat{q}$ satisfies
\begin{equation}
\label{eq:optimal_blameless}
    \mc{H}_i(\hat{q}) = \mc{Y}_i \cap \text{Suc}_N(\mc{X}_0), \ \forall \ 1\leq i\leq m.
\end{equation}
\end{theorem}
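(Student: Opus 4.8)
The plan is to prove the two directions of the equivalence separately, working throughout with the reachable terminal set $S := \text{Suc}_N(\mc{X}_0)$ and the critical index $i^\star := \min\{i : \mc{Y}_i \cap S \neq \varnothing\}$. The bridge to the earlier results is the observation that, by Proposition~\ref{prop:reach} together with the nesting $\mc{Y}_1 \subseteq \dots \subseteq \mc{Y}_m$, a dynamically feasible control sequence is blameless if and only if its terminal state satisfies $x_N \in \mc{Y}_{i^\star}$; indeed, for $i \geq i^\star$ nestedness propagates membership, while for $i < i^\star$ the set $\mc{Y}_i \cap S$ is empty and the implication in Proposition~\ref{prop:reach} is vacuous. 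This reduces ``blamelessness of the minimizer of~\eqref{eqn:cts_obj}'' to a statement about where $x_N^\star$ lands. I would also assume, without loss of generality, that the level thresholds are ordered $\alpha_1 \le \dots \le \alpha_m$, so that $\mc{H}_1(\hat{q}) \subseteq \dots \subseteq \mc{H}_m(\hat{q})$, mirroring the nesting of the $\mc{Y}_i$.

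For sufficiency ($\Leftarrow$), assume~\eqref{eq:optimal_blameless} holds. By minimality of $i^\star$ we have $\mc{Y}_{i^\star - 1} \cap S = \varnothing$ (a vacuous step when $i^\star = 1$), so the hypothesis gives $\mc{H}_{i^\star - 1}(\hat{q}) = \varnothing$: no dynamically feasible trajectory attains cost $\le \alpha_{i^\star - 1}$, confirming that lower-priority sets are genuinely out of budget. Conversely $\mc{H}_{i^\star}(\hat{q}) = \mc{Y}_{i^\star} \cap S \neq \varnothing$ exhibits a feasible trajectory of cost $\le \alpha_{i^\star}$, so the optimal value of~\eqref{eqn:cts_obj} is at most $\alpha_{i^\star}$. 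Hence any minimizer $(x^\star_{0:N}, u^\star_{0:N-1})$ is a feasible trajectory with $\hat{q} \le \alpha_{i^\star}$, i.e. $x_N^\star \in \mc{H}_{i^\star}(\hat{q}) = \mc{Y}_{i^\star} \cap S \subseteq \mc{Y}_{i^\star}$, which by the bridging characterization means $u^\star_{0:N-1}$ is blameless.

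For necessity ($\Rightarrow$), I would argue by contraposition: assuming $\mc{H}_j(\hat{q}) \neq \mc{Y}_j \cap S$ for some $j$, I would exhibit a configuration whose minimizer is blameworthy. Splitting into the two ways the set equality can fail: if there is $x \in \mc{H}_j(\hat{q}) \setminus (\mc{Y}_j \cap S)$, then (since $\mc{H}_j(\hat{q}) \subseteq S$ always) $x$ is reachable at cost $\le \alpha_j$ yet lies outside $\mc{Y}_j$, and selecting the problem data so that $j = i^\star$ and $x$ is the cheapest reachable terminal state forces a minimizer with $x_N^\star \notin \mc{Y}_{i^\star}$, hence blameworthy by Definition~\ref{def:blameworthy}; symmetrically, if there is $x \in (\mc{Y}_j \cap S) \setminus \mc{H}_j(\hat{q})$, then a genuinely satisfiable priority-$j$ terminal state is excluded from the cost budget $\alpha_j$, so the cheapest in-budget trajectory can be driven into a strictly lower-priority (larger-index) set, again producing a blameworthy minimizer.

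I expect the necessity direction to be the main obstacle, for two reasons. First, it requires making precise the quantifier hidden in ``produces blameless control sequences'': the cleanest reading is that blamelessness must hold across the admissible initial-condition data (equivalently, across the reachable sets $S$ that the construction is permitted to realize), and the theorem is only as strong as that quantification. Second, the counterexample construction must verify that each perturbed scenario remains consistent with Assumption~\ref{assum:safetysets} and the nesting of Definition~\ref{def:Yi}, and that the constructed minimizer is not merely suboptimal but genuinely blameworthy. The sufficiency direction, by contrast, is a short monotonicity argument once the ordering of the $\alpha_i$ and the bridging characterization of blamelessness are established.
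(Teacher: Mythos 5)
Your sufficiency direction is sound and is essentially the paper's argument in different packaging: the set identity \eqref{eq:optimal_blameless} forces every minimizer of \eqref{eqn:cts_obj} to have terminal state in $\mc{H}_{i^\star}(\hat{q}) = \Yset_{i^\star}\cap \text{Suc}_N(\Xset_0)$, hence in $\Yset_{i^\star}$ (the paper phrases this as a contradiction with optimality, you phrase it through the optimal value being at most $\alpha_{i^\star}$; your observation that $\mc{H}_{i^\star-1}(\hat{q})=\varnothing$ is true but not needed). The only thing to add there is the existence of a minimizer, which the paper obtains from compactness of $\Uset$ and continuity of $f$ and $\hat{q}$.

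The necessity direction has a genuine gap, and it is not filled the way you anticipate. In the theorem the dynamics, $\Xset_0$, $\Uset$, and the sets $\Yset_i$ are all fixed; only the objective $\hat{q}$ is in question. Your contrapositive plan (``selecting the problem data so that $j=i^\star$ and $x$ is the cheapest reachable terminal state'') perturbs exactly the data the statement holds fixed, so even if completed it would prove a different theorem, quantified over scenarios, which is not the reading the paper takes. In addition, your second failure case is asserted rather than proved: the existence of a point of $\Yset_j\cap\text{Suc}_N(\Xset_0)$ excluded from the budget $\alpha_j$ does not by itself force the global minimizer of \eqref{eqn:cts_obj} to land outside $\Yset_{i^\star}$, so ``the cheapest in-budget trajectory can be driven into a strictly lower-priority set'' needs an argument you have not supplied. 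You also never say where the thresholds $\alpha_i$ come from in this direction: in the hypothesis they are given, but to establish necessity one must exhibit them. The paper's own route is direct and stays inside the fixed problem: it takes $\alpha_i$ to be the value separating the costs of feasible trajectories terminating inside $\Yset_i$ from those terminating outside, argues that blamelessness of the solutions of \eqref{eqn:cts_obj} forces $\max\{\hat{q} : x_N\in\Yset_i\} \le \inf\{\hat{q} : x_N\notin\Yset_i\}$, and then reads off both inclusions $\mc{H}_i(\hat{q})\subseteq \Yset_i\cap\text{Suc}_N(\Xset_0)$ and its converse from that separation together with \eqref{eqn:levelset}. To repair your proof, replace the data-perturbation construction with a separation argument of this type (or else explicitly state, and justify, the stronger quantification over initial-condition data that your plan implicitly relies on).
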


\begin{proof}
Note that $\mc{H}_{i}(\hat{q})$ are compact sets since the dynamics~\eqref{eqn:f_dynamics} and the objective function $\hat{q}(x_{0:N}, u_{0:N-1})$ are continuous functions and $\Uset$ is a compact set.

First, we prove by contradiction that if $\hat{q}$ satisfies~\eqref{eq:optimal_blameless}, then~\eqref{eqn:cts_obj} produces blameless control sequences. Suppose $\hat{q}$ satisfies~\eqref{eq:optimal_blameless}, but the result of~\eqref{eqn:cts_obj} is blameworthy. This means if the solution to $\hat{q}$ is $({x}_{0:N}^\star, {u}_{0:N-1}^\star) \in \mc{F}(\mc{X}_0, \mc{U})$ with $x_N^\star \not \in \Yset_i$, there exists another state and control sequence pair, $({x}^\dag_{0:N}, {u}^\dag_{0:N-1}) \in \mc{F}(\mc{X}_0, \mc{U})$ with ${x}^\dag_N \in \Yset_i$. Then, by~\eqref{eq:optimal_blameless}, we have $\hat{q}({x}^\dag_{0:N}, {u}^\dag_{0:N-1}) \leq \alpha_i < \hat{q}({x}_{0:N}^\star, {u}_{0:N-1}^\star)$, which contradicts the definition of an optimal solution of~\eqref{eqn:cts_obj}. 

We will prove the reverse implication directly. Suppose~\eqref{eqn:cts_obj} produces blameless control sequences. Then, by construction,
\begin{align*}
    \begin{aligned}[t]
    \max_{u_{0:N-1}} ~&~ \hat{q}(x_{0:N},u_{0:N-1}) \\ 
    \text{s.t. } &  \mc{F}(\mc{X}_0, \mc{U}) \\
    & x_N \in \Yset_i 
    \end{aligned}
    \leq \alpha_i = 
    \begin{aligned}[t]
    \inf_{u_{0:N-1}} ~&~ \hat{q}(x_{0:N},u_{0:N-1}) \\ 
    \text{s.t. } & \mc{F}(\mc{X}_0, \mc{U}) \\
    & x_N \notin \Yset_i
    \end{aligned}
\end{align*}
since, otherwise, there exists a solution to~\eqref{eqn:cts_obj}, $({x}_{0:N}^\star,{u}_{0:N-1}^\star) \in \mc{F}(\mc{X}_0, \mc{U})$ with $x_N^\star \notin \Yset_i$, for which $\hat{q}({x}_{0:N}^\star,{u}_{0:N-1}^\star) \leq \hat{q}({x}_{0:N}^\dag,{u}_{0:N-1}^\dag)$ for some $({x}_{0:N}^\dag,{u}_{0:N-1}^\dag) \in \mc{F}(\mc{X}_0, \mc{U})$ with ${x}_N^\dag \in\Yset_i$. That is, the control sequence $u_{0:N-1}^\star$ found by solving~\eqref{eqn:cts_obj} is blameworthy, which results in a contradiction. Thus, if $\hat{q}(x_{0:N},u_{0:N-1})\leq \alpha_i$ then $x_N \in \Yset_i$, and we have $x_N \in \mc{H}_{i}(\hat{q})$ implies $x_N \in \Yset_i$, \ie~$\mc{H}_{i} (\hat{q}) \subseteq \Yset_i \cap \text{Suc}_N(\mc{X}_0)$.
Conversely, if $z \in \Yset_i \cap \text{Suc}_N(\mc{X}_0)$ then there exists $(x_{0:N}, u_{0:N-1}) \in \mc{F}(\mc{X}_0, \mc{U})$ with $x_N = z$. By definition of $\alpha_i$, we have $\hat{q}(x_{0:N},u_{0:N-1}) \leq \alpha_i$. Thus, $z \in \mc{H}_i(\hat{q})$ i.e. $\Yset_i \cap \text{Suc}_N(\mc{X}_0) \subseteq \mc{H}_{i} (\hat{q})$.

\end{proof}

Theorem~\ref{thm:optimal_blameless} provides the necessary and sufficient conditions~\eqref{eq:optimal_blameless} for constructing an objective function $\hat{q}$ for which~\eqref{eqn:cts_obj} produces blameless control sequences. 
Next, we consider whether it is possible to construct the objective function $\hat{q}$ such that it produces control sequences that are both blameless and optimal with respect to the original user-defined objective function~$q$. The following corollary shows that there are continuous objectives $q$ for which all continuous objectives that produce blameless control sequences, $\hat{q}$ produce sub-optimal control sequences. That is, there does not exist an objective satisfying~\eqref{eq:optimal_blameless} that produces control sequences that are optimal with respect to $q$.

\begin{corollary} \label{cor:optimal_blameless}

There exists a continuous objective $q$ such that there is no objective $\hat{q}$ that produces blameless control sequences that are also optimal with respect to $q$.

\end{corollary}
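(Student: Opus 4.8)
The plan is to prove the corollary by exhibiting a single explicit instance---dynamics, prioritized sets, and a mission objective $q$---for which \emph{every} $\hat q$ satisfying the characterization of Theorem~\ref{thm:optimal_blameless} fails to return the blamelessly optimal sequence. I would reuse the geometry of Figure~\ref{fig:tradeoff_example}: take the scalar single integrator $x_{k+1}=x_k+u_k$ with $x_0=0$ and $u_k\in[-\bar u,\bar u]$ for a large $\bar u$, so that $\text{Suc}_N(\mc X_0)$ is a compact interval, and set $\mc Y_1=[0,1]\subsetneq\mc Y_2=[-1,2]$ strictly inside $\text{Suc}_N(\mc X_0)$ so that $i^\star=1$. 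For the objective take a strictly convex terminal cost, e.g.\ $q(x_N)=(x_N+\tfrac12)^2$, whose unconstrained minimizer $-\tfrac12$ lies in $\text{Suc}_N(\mc X_0)\setminus\mc Y_1$. The blamelessly optimal terminal state is then the unique point $z^\star=\arg\min_{x\in\mc Y_1\cap\text{Suc}_N(\mc X_0)}q(x)=0$, which crucially lies on the boundary $\partial\mc Y_1$ and is approached by exterior points $x\in\text{Suc}_N(\mc X_0)\setminus\mc Y_1$.

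Next I would reduce any candidate $\hat q$ to its induced terminal value function $V(x_N)=\min\{\hat q(x_{0:N},u_{0:N-1}):(x_{0:N},u_{0:N-1})\in\mc F(\mc X_0,\mc U),\ \text{terminal state}=x_N\}$. Because $f$ is continuous, $\mc U$ is compact, and the chosen dynamics are fully controllable, $V$ is continuous on $\text{Suc}_N(\mc X_0)$ (a Berge maximum-theorem argument), and by definition $\mc H_i(\hat q)=\{x_N\in\text{Suc}_N(\mc X_0):V(x_N)\le\alpha_i\}$. By Theorem~\ref{thm:optimal_blameless}, any $\hat q$ that produces blameless sequences must satisfy $\{V\le\alpha_{1}\}\cap\text{Suc}_N(\mc X_0)=\mc Y_1\cap\text{Suc}_N(\mc X_0)$. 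The key step is to pin the value of $V$ at $z^\star$: taking exterior points $y_k\to z^\star$ with $y_k\in\text{Suc}_N(\mc X_0)\setminus\mc Y_1$ gives $y_k\notin\mc H_1(\hat q)$, hence $V(y_k)>\alpha_1$, and continuity forces $V(z^\star)\ge\alpha_1$; since $z^\star\in\mc Y_1\cap\text{Suc}_N(\mc X_0)=\mc H_1(\hat q)$ also gives $V(z^\star)\le\alpha_1$, I conclude $V(z^\star)=\alpha_1$.

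I would then finish with a case split on the global minimum $c^\star=\min_{\text{Suc}_N(\mc X_0)}V\le\alpha_1$. If $c^\star<\alpha_1$, then $V(z^\star)=\alpha_1>c^\star$, so $z^\star$ is \emph{not} a minimizer of $\hat q$; the solution of~\eqref{eqn:cts_obj} therefore has terminal state different from $z^\star$ and, since $q$ has a unique minimizer over $\mc Y_1\cap\text{Suc}_N(\mc X_0)$, is strictly $q$-suboptimal. If instead $c^\star=\alpha_1$, then $V\ge\alpha_1$ everywhere, so $\{V\le\alpha_1\}=\{V=\alpha_1\}=\mc Y_1\cap\text{Suc}_N(\mc X_0)$ is the entire set of minimizers; since this set contains more than one point and only $z^\star$ is $q$-optimal, minimizing $\hat q$ does not single out the blamelessly optimal sequence and admits $q$-suboptimal solutions. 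In both cases $\hat q$ fails to produce a blamelessly optimal control sequence, and since $\hat q$ was an arbitrary objective satisfying Theorem~\ref{thm:optimal_blameless}, no such objective exists, proving the corollary.

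The main obstacle I anticipate is the continuity of the induced value function $V$ and the attendant boundary bookkeeping: the equalities in~\eqref{eq:optimal_blameless} must be read with the correct strict/non-strict inequalities at the threshold $\alpha_1$, and the argument $V(z^\star)=\alpha_1$ relies on being able to approach $z^\star$ from $\text{Suc}_N(\mc X_0)\setminus\mc Y_1$ with controls that vary continuously. Choosing benign, fully controllable dynamics (as above) is what makes $V$ continuous and the exterior approach available; for general $f$ one would instead only have lower semicontinuity of $V$, which already suffices for the inequality $V(z^\star)\ge\alpha_1$ but should be stated carefully. A secondary point worth checking is that the construction genuinely has $i^\star=1$ and $\mc Y_1\cap\text{Suc}_N(\mc X_0)$ of cardinality greater than one, both of which hold for the interval geometry above.
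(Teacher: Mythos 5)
Your argument is correct and rests on the same essential mechanism as the paper's proof: any $\hat q$ satisfying the characterization of Theorem~\ref{thm:optimal_blameless} must take the value $\alpha_1$ at a boundary point of $\Yset_1$ that is approachable from outside within $\text{Suc}_N(\Xset_0)$, while being $\leq\alpha_1$ throughout $\Yset_1\cap\text{Suc}_N(\Xset_0)$; choosing a mission objective whose unique blameless optimum sits exactly at such a boundary point then makes it impossible for minimization of $\hat q$ to isolate it. The paper does this abstractly (it shows $\hat q=\alpha_i$ on $\delta\Yset_i$ directly from continuity of $\hat q$ and picks $q(x_{0:N},u_{0:N-1})=\|x_N-z\|$ for a boundary point $z$ with a reachable neighborhood), whereas you instantiate a concrete single-integrator example and route the argument through the induced terminal value function $V$, continuity via Berge, and an explicit case split on whether $\min V<\alpha_1$ or $=\alpha_1$; your version is somewhat more careful about the boundary bookkeeping and about what "fails to produce a $q$-optimal solution" means in the tie case, at the cost of the extra value-function machinery (which your controllable, box-constrained instance does justify). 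Two minor points to fix: first, your closing remark that lower semicontinuity of $V$ "already suffices" for $V(z^\star)\geq\alpha_1$ is backwards --- lower semicontinuity only prevents $V$ from jumping \emph{down} in the limit of the exterior sequence, so it is upper semicontinuity (or the full continuity you have for your instance) that yields $V(z^\star)\geq\limsup_k V(y_k)\geq\alpha_1$; this does not affect your constructed example, only the aside about general $f$. Second, your instance has every $\Yset_i$ feasible ($i^\star=1$), which technically conflicts with the paper's standing assumption that not all nested sets are reachable; this is cosmetic and is repaired by inserting an unreachable highest-priority set and running the identical argument at the smallest feasible index.
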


\begin{proof}
We will prove by construction that there exists an objective $q$ whose optimal solution does not correspond with the optimal of any objective $\hat{q}$ satisfying~\eqref{eq:optimal_blameless}.

Consider a continuous objective $\hat{q}({x}_{0:N}, {u}_{0:N-1})$ that satisfies~\eqref{eq:optimal_blameless}. We will show $\hat{q}({x}_{0:N}, {u}_{0:N-1}) = \alpha_i$ for all ${x}_N$ on the boundary of $\Yset_i$, denoted $\delta \Yset_i$. If $\hat{q}({x}_{0:N}, {u}_{0:N-1}) > \alpha_i$ for $x_N \in \delta \Yset_i$, then by continuity of $\hat{q}$, we have $H_i \not\subseteq \Yset_i$, and likewise if $\hat{q}({x}_{0:N}, {u}_{0:N-1}) < \alpha_i$ for $x_N \in \delta \Yset_i$ then by continuity, $\Yset_i \not\subseteq H_i$. Thus, by contradiction with~\eqref{eqn:levelset}, we have $\hat{q}({x}_{0:N}, {u}_{0:N-1}) = \alpha_i$ for all $x_N \in \delta \Yset_i$. Furthermore, by~\eqref{eqn:levelset}, for any $({x}_{0:N}^\dag, {u}_{0:N-1}^\dag)\in \mc{F}(\Xset_0, \Uset)$ with  ${x}_N^\dag$ in the interior of $\Yset_i$ and $({x}^\ddag_{0:N}, {u}_{0:N-1}^\ddag)\in\mc{F}(\Xset_0, \Uset)$ with ${x}_N^\ddag \in \delta\Yset_i$, we have $\hat{q}({x}_{0:N}^\dag, {u}_{0:N-1}^\dag) \leq \hat{q}({x}_{0:N}^\ddag, {u}_{0:N-1}^\ddag)$.

Next, consider the objective $q({x}_{0:N}, {u}_{0:N-1}) = \| x_N - z \|$ where $z \in \delta \Yset_i$ is some  point on the boundary of $\Yset_i$ with a reachable neighborhood $N(z) = \{ y \in \Yset_i : \| y - z \| \leq \epsilon \}$ for some $\epsilon$. By construction, this objective is continuous and minimized by the feasible solution $x^\star_N = z$. 

By construction, we have $q({x}_{0:N}, {u}_{0:N-1}) > q({x}_{0:N}^\star, {u}_{0:N-1}^\star)$ for any feasible sequence ${x}_{0:N}$ with terminal condition $x_N \neq z \in N(z)$. In contrast, $\hat{q}({x}_{0:N}, {u}_{0:N-1}) \leq \hat{q}({x}_{0:N}^\star, {u}_{0:N-1}^\star) = \alpha_i$ for any terminal condition $x_N \in N(z)$. Thus, there exists $x_N \in N(z)$ that is optimal with respect to $\hat{q}$, $\hat{q}({x}_{0:N}, {u}_{0:N-1}) \leq \hat{q}({x}_{0:N}^\star, {u}_{0:N-1}^\star)$, but is sub-optimal with respect to $q$, \ie~$q({x}_{0:N}, {u}_{0:N-1}) > q({x}_{0:N}^\star, {u}_{0:N-1}^\star)$. 
\end{proof}

As a consequence of Corollary~\ref{cor:optimal_blameless}, it is possible for the user to define a continuous mission objective $q$ for which it is impossible to formulate a single continuous optimal control problem that produces control sequences that are both blameless and optimal. In other words, Problem~\ref{prob:cts_obj} does not necessarily have a solution.

It follows that a blamelessly optimal control sequence must be found by solving at least two sub-problems. We propose an algorithm for finding blamelessly optimal control sequences using exactly two sub-problems, rather than up to $m$ problems like in Algorithm~\ref{alg:brute}. The proposed algorithm first finds the highest priority set for which a feasible solution exists. Then, the optimal control sequence with respect to the mission objective subject to the highest priority constraint is found. The algorithm is presented in Algorithm~\ref{alg:blameless}. For further details on how $i^\star$ is found in Algorithm~\ref{alg:blameless}, see~\cite{github}.

\begin{algorithm}
\caption{Two-Stage Blameless Control}
\label{alg:blameless}
\begin{algorithmic}
\State Minimize $i^\star$ such that $\Yset_{i^\star} \cap \mathrm{Suc}_N(\Xset_0) \neq \varnothing$
\State Solve \vspace{-1em}
\begin{subequations}
\label{eq:min-blameless}
\begin{alignat}{3}
&& \min_{u_{0:N-1}} ~& q(x_{0:N},u_{0:N-1}) \\ 
&& \st ~& (x_{0:N}, u_{0:N-1}) \in \mc{F}(\mc{X}_0, \mc{U}), \\
&&& x_{N } \in \Yset_{i^\star} \label{eq:blamless_safety}
\end{alignat}
\end{subequations}
\end{algorithmic}
\end{algorithm}

\begin{proposition}
The solution to Algorithm~\ref{alg:blameless} solves Problem~\ref{prob:Pi}.
\end{proposition}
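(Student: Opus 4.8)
The plan is to verify that the output of Algorithm~\ref{alg:blameless} satisfies both conditions of blameless optimality in Definition~\ref{def:blameless_optimal}, using the equivalent reformulation of Problem~\ref{prob:Pi} stated in the remark following it: namely, (1) identify the smallest feasible index $i^\star$ with $x_N \in \Yset_{i^\star}$, and (2) minimize $q$ subject to $x_N \in \Yset_{i^\star}$. Since these two tasks are exactly the two stages of Algorithm~\ref{alg:blameless}, the proof reduces to confirming that each stage implements the corresponding task and that the resulting optimization is well posed.

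First I would argue that Stage~1 returns the correct index. By definition of the successor set, $\text{Suc}_N(\Xset_0)$ is exactly the set of terminal states $x_N$ reachable by some dynamically feasible $(x_{0:N}, u_{0:N-1}) \in \mc{F}(\Xset_0, \Uset)$. Hence $\Yset_i \cap \text{Suc}_N(\Xset_0) \neq \varnothing$ holds if and only if some dynamically feasible control sequence has terminal state in $\Yset_i$. Minimizing $i$ over this condition therefore yields precisely the smallest nested constraint set that can be satisfied, which is the index $i^\star$ appearing in Definition~\ref{def:blameworthy}. Existence of at least one feasible index (so that the minimization terminates with some $i^\star \leq m$) is guaranteed by the standing assumption in the remark following Problem~\ref{prob:Pi} that some nested set is feasible, together with the nestedness $\Yset_{i^\star} \subseteq \Yset_m$.

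Next I would establish blamelessness of the Stage~2 solution. The constraint $x_N \in \Yset_{i^\star}$ forces the returned trajectory into the highest-priority feasible nested set, so it is not blameworthy by Definition~\ref{def:blameworthy} and hence blameless by Definition~\ref{def:blameless}. Equivalently, this is the conclusion of Proposition~\ref{prop:reach}: the antecedent $\Yset_{i^\star} \cap \text{Suc}_N(\Xset_0) \neq \varnothing$ holds and the solution satisfies $\text{Suc}_N(\Xset_0, u_{0:N-1}) \in \Yset_{i^\star}$, while for every $i < i^\star$ the antecedent $\Yset_i \cap \text{Suc}_N(\Xset_0) \neq \varnothing$ fails by minimality of $i^\star$, so no higher-priority set imposes a violated requirement.

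Finally I would verify optimality among blameless sequences. The feasible region of the Stage~2 program~\eqref{eq:min-blameless}, namely $\{(x_{0:N}, u_{0:N-1}) \in \mc{F}(\Xset_0, \Uset) : x_N \in \Yset_{i^\star}\}$, is exactly the set of dynamically feasible blameless state-control pairs, since every blameless sequence has $x_N \in \Yset_{i^\star}$ and conversely. Minimizing $q$ over this set therefore returns a sequence with $q(x_{0:N}, u_{0:N-1}) \leq q(\hat{x}_{0:N}, \hat{u}_{0:N-1})$ for every blameless $(\hat{x}_{0:N}, \hat{u}_{0:N-1}) \in \mc{F}(\Xset_0, \Uset)$, which is the second condition of Definition~\ref{def:blameless_optimal}; combined with blamelessness this is exactly blameless optimality, solving Problem~\ref{prob:Pi}. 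The one point needing care, and the main obstacle, is well-posedness of Stage~2: I would note that the feasible region is nonempty (by the choice of $i^\star$) and compact (since $\Uset$ and each $\Zset_j$, hence $\Yset_{i^\star}$, are compact and $f,q$ are continuous, as already used in the proof of Theorem~\ref{thm:optimal_blameless}), so the continuous objective $q$ attains its minimum and the algorithm returns a genuine minimizer rather than only an infimum.
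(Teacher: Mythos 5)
Your proposal is correct and follows essentially the same route as the paper: blamelessness follows because Stage~1 identifies the smallest index $i^\star$ with $\Yset_{i^\star} \cap \mathrm{Suc}_N(\Xset_0) \neq \varnothing$ and the constraint $x_N \in \Yset_{i^\star}$ in~\eqref{eq:min-blameless} (invoking Proposition~\ref{prop:reach}), and optimality among blameless sequences follows because the feasible set of~\eqref{eq:min-blameless} coincides with the dynamically feasible blameless pairs---your direct argument is just the contrapositive of the paper's contradiction argument. Your added remark on attainment of the minimum via compactness is a harmless strengthening not present in the paper's proof.
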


\begin{proof}
We will prove that the solution to Algorithm~\ref{alg:blameless} is blameless directly. Assume the solution to Algorithm~\ref{alg:blameless} is $u^\star_{0:N-1}$. Then, the resulting state sequence $x^\star_{0:N}$ has $x_N \in \Yset_{i^\star}$, so that $\Yset_{i^\star}\cap \mathrm{Suc}_N(\Xset_0)\neq \varnothing$. It follows directly from Proposition~\ref{prop:reach} that the control sequence is blameless.

We will prove by contradiction that the solution to Algorithm~\ref{alg:blameless} is blamelessly optimal. Assume the solution to Algorithm~\ref{alg:blameless} is $(x^\star_{0:N}, u^\star_{0:N-1})$, but there exists a blameless solution $(x^\dag_{0:N}, u^\dag_{0:N-1})$ with $q(x^\dag_{0:N}, u^\dag_{0:N-1}) < q(x^\star_{0:N}, u^\star_{0:N-1})$. Then, $(x^\star_{0:N}, u^\star_{0:N-1})$ is sub-optimal with respect to Problem~\eqref{eq:min-blameless}, contradicting the assumption that $(x^\star_{0:N}, u^\star_{0:N-1})$ is the solution to Algorithm~\ref{alg:blameless}. Thus, the optimal solution to Algorithm~\ref{alg:blameless} is blamelessly optimal.
\end{proof}

Algorithm~\ref{alg:blameless} takes advantage of the fact that the prioritized sets are nested to find blamelessly optimal control sequences. Specifically, since high priority sets are contained in low priority sets, the problem of imposing priority within an optimal control algorithm can be posed as two decoupled problems: a set inclusion problem that finds the highest priority set that has a nonempty intersection with the dynamically feasible set, and an optimal control problem that finds the optimal control sequence that is in the set found in the first problem. The result is an algorithm that requires solving two optimization problems, rather than the $m$ required by lexicographic optimization. This becomes computational beneficial when the number of prioritized sets, $m$, is large.


\section{Landing under Prioritized Sets} 
\label{sec:results}
We consider the problem of an autonomous lander selecting a landing site, subject to control limits and limited power. Prioritized sets are defined to dictate the most desirable landing sites. This problem is illustrated in Figure~\ref{fig:helicopter} in three-dimensional space. For ease of presentation, the results consider the problem restriction to two-dimensional space, so that the landing sites are restricted to a line.

\subsubsection*{Lander Dynamics.}
We model the lander as a linear system. The state consists of the velocity and position of the lander, $x = \bma{cccc} \dot{r}^x & \dot{r}^y & r^x & r^y \ema^\trans$. The control input is acceleration $u = \bma{cc} a^x & a^y \ema^\trans$. The affine continuous-time dynamics are $\dot{x} = A x + B u + C g$
%
%
where 
\begin{align*}
    A = \bma{cc}
        \mbf{0}_{2\times2} & \mbf{0}_{2\times2}\\
        I_{2\times2} & \mbf{0}_{2\times2}
    \ema,\quad 
    B = \bma{c}
        I_{2\times2} \\
        \mbf{0}_{2\times2}
    \ema,\quad
    C = \bma{c}
        0\\
        1\\
        0_{2\times1}
    \ema,
\end{align*}
and $g = 9.81 \frac{{\text{m}}}{{\text{s}}^2}$ is the acceleration due to gravity. The control is subject to the box constraints
\begin{align*}
    a^x \in \big[-10, 10 \big] \frac{{\text{m}}}{{\text{s}}^2} \text{ and } a^y \in \big[9,30\big] \frac{{\text{m}}}{{\text{s}}^2}.
\end{align*}
%
The lander has initial condition
\begin{align*}
    {x}_0 = \bma{cccc}-10 \frac{{\text{m}}}{{\text{s}}}& -5 \frac{{\text{m}}}{{\text{s}}} & -130{\text{m}}& 100{\text{m}} \ema^\trans,
\end{align*}
and sufficient power to last time $T = 12$ seconds.

\subsubsection*{Prioritized constraints.}
Five prioritized safety constraints are imposed on the states $\dot{r}_N^x$ and ${r}_N^x$, jointly denoted $x_N^x = \bma{cc}\dot{r}_N^x & {r}_N^x \ema$. Namely, the position at which the lander lands and the velocity in the direction parallel to the ground at the time of landing are constrained by the prioritized safety constraints. The resulting nested constraint sets are shown in Figure~\ref{fig:Ysets} and defined as follows
\begin{align*}
    \Yset_1 &= \{(\dot{r}^x_N, {r}^x_N) \mid \dot{r}^x_N \in \left[-0.5, 0.5\right] \frac{{\text{m}}}{{\text{s}}}, {r}^x_N \in \left[ -5,5 \right]{\text{m}} \},\\
    \Yset_2 &= \{(\dot{r}^x_N, {r}^x_N) \mid \dot{r}^x_N \in \left[-4, 4\right] \frac{{\text{m}}}{{\text{s}}}, {r}^x_N \in \left[ -15,12 \right]{\text{m}} \},\\
    \Yset_3 &= \{(\dot{r}^x_N, {r}^x_N) \mid \dot{r}^x_N \in \left[-7, 7\right] \frac{{\text{m}}}{{\text{s}}}, {r}^x_N \in \left[ -30,26 \right]{\text{m}} \},\\
    \Yset_4 &= \{(\dot{r}^x_N, {r}^x_N) \mid \dot{r}^x_N \in \left[-10, 10\right] \frac{{\text{m}}}{{\text{s}}}, {r}^x_N \in \left[ -40,35 \right]{\text{m}} \},\\
    \Yset_5 &= \{(\dot{r}^x_N, {r}^x_N) \mid \dot{r}^x_N \in \left[-15, 15\right] \frac{{\text{m}}}{{\text{s}}}, {r}^x_N \in \left[ -45,52 \right]{\text{m}} \}.
\end{align*}
\begin{figure}
    \centering
        \includegraphics[width=0.6\columnwidth, trim={4cm 3cm 3cm 2cm},clip]{figs/ysets.png}
        \caption{Sets $\Yset_1,\ldots,\Yset_5$.}
        \label{fig:Ysets}
\end{figure}

\subsubsection*{Objective Function.} 
The objective function~\eqref{eq:objective} is the quadratic objective $q(x_{0:N}, u_{0:N-1}) = \sum_{k=1}^N q_k(x_{k}, u_{k-1})$, with
\begin{align} \label{eq:results_obj}
    q_k(x_{k}, u_{k-1}) = u_{k-1}^\trans R u_{k-1} \!+\! (x^x_{N} \!-\! c_i)^\trans Q (x^x_{N}\!-\!c_i),
\end{align}
where $R\in\reals^{2\times2}$ is the input cost matrix, $Q\in\reals^{2\times 2}$ is the regulator cost matrix, and $c_i \in \reals^2$ is the center of the safety set $\Yset_i$. The weights, $Q$ and $R$ are tuned by the user.

Blamelessly optimal control sequences are found by solving Algorithm~\ref{alg:blameless}. First, an objective function is generated that satisfies the necessary and sufficient conditions for an objective function that produces blameless, but not necessarily optimal, control sequences, according to Theorem~\ref{thm:optimal_blameless}. This objective is used to determine the smallest index $i^\star$ such that $x_N^x \in \Yset_{i^\star}$. Then, the control sequence $u^\star_{0:N-1}$ that is optimal with respect to~\eqref{eq:results_obj} subject to dynamics~\eqref{eqn:opt_ctrl_feasible_set} and terminal state constraint $x^x_N \in \Yset_{i^{\star}}$ is found. 

The results of Algorithm~\ref{alg:blameless} are compared to the results of Algorithm~\ref{alg:brute}, and an optimal control algorithm that is optimal with respect to the quadratic objective $q(x_{0:N}, u_{0:N-1}) = \sum_{k=1}^N q_k(x_{k}, u_{k-1})$, with
\begin{align}\label{eq:objective_regulator}
    q_k(x_{k}, u_{k-1}) = u_{k-1}^\trans R u_{k-1} \!+\! (x^x_{N} \!-\! c_1)^\trans Q (x^x_{N} \!-\! c_1),
\end{align}
where $c_1$ is the center of the highest priority safety set. This algorithm does not guarantee that the resulting control sequences are blameless. 

The weighting matrices are $R = \diag(0.5, 1)^2 \frac{1}{{\text{m}}^2}$ and $Q = 5^2 I \frac{{\text{s}}^2}{{\text{m}}^2}$ for the blamelessly optimal and brute force algorithms. The optimal algorithm uses $R = \diag(0.5, 1)^2 \frac{1}{{\text{m}}^2}$ and is tested with two values of the input weighting matrix, $Q = 5^2I \frac{{\text{s}}^2}{{\text{m}}^2}$ and $Q = 0.15^2 I \frac{{\text{s}}^2}{{\text{m}}^2}$.

\begin{figure}
     \centering
     \includegraphics[width=0.95\columnwidth]{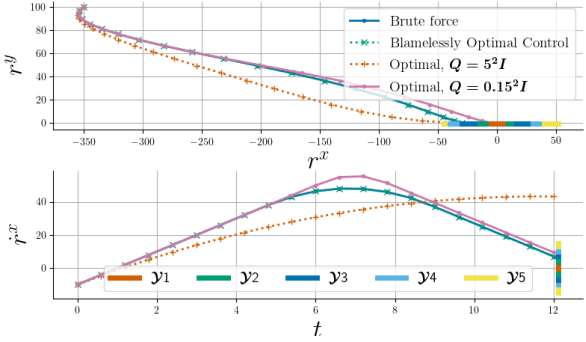}
     \caption{Comparison of blameless optimality, brute force and optimal control algorithm for 12 second trajectory.}\vspace{-0.6cm}
     \label{fig:alg_comp}
\end{figure}

\subsubsection*{Results} Figure~\ref{fig:alg_comp} shows the trajectories and velocity in the $x$-direction generated using the brute force algorithm (Algorithm~\ref{alg:brute}), the blamelessly optimal control algorithm (Algorithm~\ref{alg:blameless}), and the algorithm optimal with respect to~\eqref{eq:objective_regulator} with two 
weights. 
The brute force and blamelessly optimal trajectories are identical, and both result in solutions that have terminal states in the set $\Yset_3$. The algorithm optimal with respect to~\eqref{eq:objective_regulator} with $Q = 5^2I \frac{{\text{s}}^2}{{\text{m}}^2}$ results in a terminal position in $\Yset_4$ and a terminal velocity outside of the safety sets, and is therefore ultimately outside of the safety sets. The trajectory resulting from the same problem with input cost matrix $Q = 0.15^2 I \frac{{\text{s}}^2}{{\text{m}}^2}$ has a terminal position in $\Yset_2$ and a terminal velocity in $\Yset_4$, and therefore ultimately a terminal state in $\Yset_4$.

\subsection{Implications of Blameless Optimal Control}
The brute force algorithm (Algorithm~\ref{alg:brute}), and the blamelessly optimal algorithm (Algorithm~\ref{alg:blameless}) result in equivalent solutions. However, the brute force algorithm requires solving up to $m$ optimization problems given $m$ safety constraints. The blameless optimization problem requires solving two optimization problems, regardless of the number of defined prioritized safety constraints. The blamelessly optimal algorithm is therefore increasingly beneficial when several safety constraints are defined.

In safety critical applications, guarantees on the blamelessness of a control sequence are required to enable trust of autonomous systems. The algorithm optimal with respect to objective~\eqref{eq:objective_regulator} requires parameter tuning to find the safest possible solution. 
Moreover, the difficulty of tuning the input weighting matrix and regulator weighting matrix increases with the number of states being constrained. When problem parameters such as initial conditions and problem horizon are uncertain, tuning the weighting matrices is impractical and could lead to blameworthy control sequences.

\section{Conclusions}\label{conclusion}

This work develops the concept of a blamelessly optimal control sequence. We show that there is a tradeoff between optimality and blamelessness, which motivates the need for blameless optimality. An algorithm is presented to solve for blamelessly optimal control sequences and results are presented on a rocket landing problem. Future work will expand the idea of blameless optimality to stochastic systems.


\bibliographystyle{IEEEtran}
\bibliography{reference}

\end{document}